\DeclareMathOperator{\sech}{sech}
\DeclareMathOperator{\csch}{csch}
\newtheorem{theorem}{Theorem}           
\newtheorem{lemma}{Lemma}               
\newtheorem{corollary}{Corollary}
\theoremstyle{definition}
\newtheorem{example}{Example}
\begin{document}

%\title[Short title]{On generalization of D'aurizio's trigonometric inequality with a parameter}

\title[Short title]{On generalization of D'Aurizio-S\'andor trigonometric inequalities with a parameter}

% Short title is optional, it will appear in running heads.
% It is necessary only if the title is to long to be used in running heads

\author{Li-Chang Hung and Pei-Ying Li}

\address{Li-Chang Hung, Department of Mathematics, National Taiwan University, Taipei, Taiwan\\
\email{lichang.hung@gmail.com}}

\address{Pei-Ying Li, Department of Finance, National Taiwan University, Taipei, Taiwan\\
\email{email of the Second Author}}

\CorrespondingAuthor{Li-Chang Hung}

%\dedicated{Dedicated to...}                    % Optional

\date{DD.MM.YYYY}                               % Please, write the date of submission

\keywords{Inequalities; trigonometric functions; monotonicity}

\subjclass{26D15, 26D99}
        % AMS-2010 subj class. The list can be found on http://www.ams.org/mathscinet/msc/msc2010.html

%\thanks{This research is supported by ...\par This paper is a lecture that was given at...} 
        % Optional. Only one command thanks is allowed, use \par inside text if you need multiple thanks.       

\begin{abstract}
%Abstract of the paper. Please, write few lines of text and try to reduce the number of mathematical formulas.
In this work, we generalize the D'Aurizio-S\'andor inequalities (\cite{D'Aurizio,Sandor}) using an elementary approach. In particular, our approach provides an alternative proof of the D'Aurizio-S\'andor inequalities. Moreover, as an immediate consequence of the generalized D'Aurizio-S\'andor inequalities, we establish the D'Aurizio-S\'andor-type inequalities for hyperbolic functions. 
%generalized D'Aurizio-S\'andor inequalities with a parameter
%

\end{abstract}

\maketitle

%%%%% END OF TITLE PAGE %%%%%%%%%%%%%%%%%%%%%%%%%%%%%%%%%%%%%%%%%%%%%%

%%%%% BODY OF THE PAPER %%%%%%%%%%%%%%%%%%%%%%%%%%%%%%%%%%%%%%%%%%%%%%
% You should eventually delete (after reading) the rest of the text below %%

\section{Introduction}

        %Theorems can be written using latex style:

Based on infinite product expansions and inequalities on series and the Riemann's zeta function, D'Aurizio (\cite{D'Aurizio}) proved the following inequality:
\begin{equation}\label{eqn: D'Aurizio ineq}
\frac{1-\displaystyle\frac{\cos x}{\cos \frac{x}{2}}}{x^2}<\frac{4}{\pi^2},
\end{equation}
where $x\in(0,\pi/2)$. Using an elementary approach, S\'andor (\cite{Sandor}) offered an alternative proof of \eqref{eqn: D'Aurizio ineq} by employing trigonometric inequalities and an auxiliary function. In the same paper, S\'andor also provided the converse to \eqref{eqn: D'Aurizio ineq}:
\begin{equation}
\frac{1-\displaystyle\frac{\cos x}{\cos \frac{x}{2}}}{x^2}>\frac{3}{8},
\end{equation}
where $x\in(0,\pi/2)$. In addition, S\'andor found the following analogous inequality \eqref{eqn: Sandor ineq}  holds true for the case of sine functions:
              
\begin{theorem}[D'Aurizio-S\'andor inequalities (\cite{D'Aurizio,Sandor})]\label{thm: Daurizio-Sandor ineq original}
The two double inequalities        
\begin{equation}
\frac{3}{8}<\frac{1-\displaystyle\frac{\cos x}{\cos \frac{x}{2}}}{x^2}<\frac{4}{\pi^2}
\end{equation}
and 
\begin{equation}\label{eqn: Sandor ineq}
\frac{4}{\pi^2}(2-\sqrt{2})<\frac{\displaystyle 2-\frac{\sin x}{\sin \frac{x}{2}}}{x^2}<\frac{1}{4}
\end{equation}
hold for any $x\in(0,\pi/2)$.
\end{theorem}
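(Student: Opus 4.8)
The plan is to eliminate the nested half-angles by the substitution $t=x/2$ (so $t\in(0,\pi/4)$), which collapses each middle quotient into an explicit one-variable function whose monotonicity can be analyzed directly; the two constants in each double inequality then appear as the one-sided limit at $t=0^+$ and as the value at the right endpoint $t=\pi/4$ (equivalently $x=\pi/2$). For the cosine quotient, the identity $\cos x=2\cos^2(x/2)-1$ gives $\cos x/\cos(x/2)=2\cos t-\sec t$, so that
\[
\frac{1-\cos x/\cos(x/2)}{x^{2}}=\frac{1-2\cos t+\sec t}{4t^{2}}=:f(t).
\]
For the sine quotient, $\sin x/\sin(x/2)=2\cos(x/2)$ together with $1-\cos t=2\sin^2(t/2)$ gives
\[
\frac{2-\sin x/\sin(x/2)}{x^{2}}=\frac{1-\cos t}{2t^{2}}=\frac14\Bigl(\frac{\sin u}{u}\Bigr)^{2},\qquad u=\tfrac{t}{2}=\tfrac{x}{4}\in\bigl(0,\tfrac{\pi}{8}\bigr).
\]

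The sine case is then immediate. The function $u\mapsto \sin u/u$ is positive and strictly decreasing on $(0,\pi)$: writing its derivative with numerator $\psi(u)=u\cos u-\sin u$, one has $\psi(0)=0$ and $\psi'(u)=-u\sin u<0$, so $\psi<0$ and $\sin u/u$ decreases; hence $\tfrac14(\sin u/u)^2$ is strictly decreasing on $(0,\pi/8)$. Its limit as $u\to0^+$ is $\tfrac14$, and using $\sin^2(\pi/8)=(2-\sqrt2)/4$ its value at $u=\pi/8$ is $\tfrac4{\pi^2}(2-\sqrt2)$; monotonicity on the open interval yields the strict bounds in \eqref{eqn: Sandor ineq}.

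For the cosine case I would show $f$ is strictly increasing on $(0,\pi/4)$ by the monotone form of L'Hôpital's rule applied twice. With $F(t)=1-2\cos t+\sec t$ and $G(t)=4t^{2}$ we have $F(0)=G(0)=0$, so it suffices to prove $F'/G'$ increasing; since $F'(t)=2\sin t+\sec t\tan t$ and $G'(t)=8t$ again vanish at $t=0$, it suffices in turn to prove $F''/G''=F''/8$ increasing, i.e. $F'''(t)>0$. A direct computation gives
\[
F'''(t)=\sin t\,\bigl[\sec^{2}t\,(6\sec^{2}t-1)-2\bigr],
\]
and on $(0,\pi/4)$ one has $\sin t>0$ and $\sec^{2}t\ge1$, whence $\sec^2 t(6\sec^2 t-1)\ge5>2$ and $F'''>0$. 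Climbing back up, $f=F/G$ is strictly increasing; its limit at $0^+$ is $\tfrac38$ and its value at $t=\pi/4$ is $f(\pi/4)=\dfrac{1}{4(\pi/4)^{2}}=\dfrac4{\pi^2}$, which gives the first double inequality of the theorem.

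The only genuine obstacle is the cosine monotonicity, and the subtlety there is organizational rather than computational: one must verify at each stage that both numerator and denominator vanish at $t=0$ and that the relevant denominator is nonzero on $(0,\pi/4)$ before invoking the monotone L'Hôpital rule, after which the decisive inequality collapses to the trivial $\sec^2 t(6\sec^2 t-1)>2$. I would avoid the alternative route of expanding $f$ as a power series $\tfrac38+\sum_{n\ge1}c_nt^{2n}$ and proving every $c_n>0$ directly, since those coefficients mix Euler-number data from $\sec t$ with those of $\cos t$ and their positivity is far less transparent than the threefold differentiation above.
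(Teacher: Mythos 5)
Your proposal is correct, but it follows a genuinely different route from the paper's. Both arguments share the same skeleton --- prove that the cosine quotient is strictly increasing and the sine quotient strictly decreasing, then read off the two constants as the limit at $0^+$ and the value at the right endpoint --- but the machinery used to get monotonicity differs. You exploit identities special to the half-angle case $p=2$: writing $t=x/2$, the identities $\cos x/\cos(x/2)=2\cos t-\sec t$ and $\sin x/\sin(x/2)=2\cos t$ collapse the sine case to the classical fact that $\sin u/u$ decreases (so $f_2^s(x)=\tfrac14\bigl(\sin(x/4)/(x/4)\bigr)^2$, a particularly clean reduction), and reduce the cosine case, via two applications of the monotone form of L'H\^opital's rule with $F(t)=1-2\cos t+\sec t$ and $G(t)=4t^2$, to the one-line inequality $F'''(t)=\sin t\,\bigl[\sec^{2}t\,(6\sec^{2}t-1)-2\bigr]>0$; all of these computations check out, and the endpoint evaluations ($3/8$, $4/\pi^2$, $1/4$, $\tfrac{4}{\pi^2}(2-\sqrt2)$) are right. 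The paper instead proves monotonicity of $f_{2}^{\triangle}$ directly in the variable $x$ by computing $\frac{d^2}{dx^2}\bigl(x^3\frac{d}{dx}f_{2}^{\triangle}(x)\bigr)$ (Lemma~\ref{lem: lots of f'''}), fixing its sign, and integrating back twice using the vanishing limits at $x=0$; multiplying by $x^3$ there serves the same purpose as your iterated L'H\^opital steps, namely clearing the $x^2$ denominator before differentiating. What each approach buys: yours is more elementary and self-contained for this particular theorem, needing only standard single-variable tools and no trigonometric addition-formula bookkeeping; but the half-angle collapse has no analogue for general $p$, whereas the paper's auxiliary-function lemma is precisely what carries over to arbitrary integers $p\ge 2$ in Theorem~\ref{thm: Daurizio-Sandor ineq generalized} and to the hyperbolic versions, which is the point of the paper.
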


Throughout this paper, we denote $\frac{1-\frac{\cos x}{\cos \frac{x}{p}}}{x^2}$ and $\frac{p-\frac{\sin x}{\sin \frac{x}{p}}}{x^2}$ by $f_c(x)$ and $f_s(x)$, respectively:
%\begin{align}
%\label{}
% f_c(x)=& \frac{1-\frac{\cos x}{\cos \frac{x}{p}}}{x^2}   \\
%f_s(x)=& \frac{p-\frac{\sin x}{\sin \frac{x}{p}}}{x^2}   
%\end{align}

\begin{align}
f_{p}^{c}(x)= & \frac{1-\displaystyle\frac{\cos x}{\cos \frac{x}{p}}}{x^2}, \\
f_{p}^{s}(x)= & \frac{p-\displaystyle\frac{\sin x}{\sin \frac{x}{p}}}{x^2}.
\end{align}

Our aim is to generalize the D'Aurizio-S\'andor inequalities for the case of $f_{p}^{c}(x)$ and $f_{p}^{s}(x)$ as follows:

\begin{theorem}[Generalized D'Aurizio-S\'andor inequalities]\label{thm: Daurizio-Sandor ineq generalized}
Let $0<x<\pi/2$. Then the two double inequalities
%\begin{align}
%\label{eqn: cos ineq p>2}
%\frac{4}{\pi^2}&<\frac{1-\displaystyle\frac{\cos x}{\cos \frac{x}{p}}}{x^2}<\frac{p^2-1}{2\,p^2}   \\
%\label{eqn: sin ineq p>2}
%\left(p-\csc \left(\frac{\pi}{2p}\right)\right)\frac{4}{\pi^2}&<\frac{\displaystyle p-\frac{\sin x}{\sin \frac{x}{p}}}{x^2}<\frac{p^2-1}{6\,p}
%\end{align}
\begin{equation}\label{eqn: cos ineq p>2}
\frac{4}{\pi^2}<\frac{1-\displaystyle\frac{\cos x}{\cos \frac{x}{p}}}{x^2}<\frac{p^2-1}{2\,p^2}
\end{equation}
and 
\begin{equation}\label{eqn: sin ineq p>2}
\frac{4}{\pi^2}\left(p-\csc \left(\frac{\pi}{2p}\right)\right)<\frac{\displaystyle p-\frac{\sin x}{\sin \frac{x}{p}}}{x^2}<\frac{p^2-1}{6\,p}
\end{equation}
hold for $p=3, 4 , 5,\cdots$.  In particular, the double inequality \eqref{eqn: sin ineq p>2} remains true when $p=2$ while the double inequality \eqref{eqn: cos ineq p>2} is reversed when $p=2$.
\end{theorem}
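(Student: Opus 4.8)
The plan is to reduce the entire theorem to a single monotonicity statement. First I would observe that all four bounds are precisely the one-sided limits of $f_p^c$ and $f_p^s$ at the endpoints of $(0,\pi/2)$. A Taylor expansion at $x=0$ gives $\lim_{x\to0^+}f_p^c(x)=\frac{p^2-1}{2p^2}$ and $\lim_{x\to0^+}f_p^s(x)=\frac{p^2-1}{6p}$, while at the right endpoint $\cos x\to0$ and $\sin x\to1$ give $\lim_{x\to(\pi/2)^-}f_p^c(x)=\frac{4}{\pi^2}$ and $\lim_{x\to(\pi/2)^-}f_p^s(x)=\frac{4}{\pi^2}\bigl(p-\csc\frac{\pi}{2p}\bigr)$. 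Thus \eqref{eqn: cos ineq p>2} and \eqref{eqn: sin ineq p>2} are equivalent to the assertions that $f_p^c$ is strictly decreasing on $(0,\pi/2)$ for $p\ge3$ and strictly increasing for $p=2$, and that $f_p^s$ is strictly decreasing for every $p\ge2$; the $p=2$ cosine case then reproduces Theorem~\ref{thm: Daurizio-Sandor ineq original}.

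Next I would package both functions through one auxiliary family. For $0<b<a$ with $a+b\le1$ set \[ \Phi_{a,b}(x)=\frac{2\sin(ax)\sin(bx)}{x^2\cos((a-b)x)}=\frac{1}{x^2}\Bigl(1-\frac{\cos((a+b)x)}{\cos((a-b)x)}\Bigr). \] The product-to-sum identity immediately gives $f_p^c=\Phi_{\frac{p+1}{2p},\frac{p-1}{2p}}$. For the sine function the device is the telescoping identity \[ p\sin\tfrac{x}{p}-\sin x=4\sin\tfrac{x}{2p}\sum_{k=1}^{p-1}\sin\tfrac{(k+1)x}{2p}\,\sin\tfrac{kx}{2p}, \] obtained by summing $\sin((k+1)\theta)-\sin(k\theta)$ with $\theta=x/p$; dividing by $x^2\sin\frac{x}{p}=2x^2\sin\frac{x}{2p}\cos\frac{x}{2p}$ yields $f_p^s=\sum_{k=1}^{p-1}\Phi_{\frac{k+1}{2p},\frac{k}{2p}}$. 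Since a finite sum of decreasing functions is decreasing, everything reduces to showing that each individual $\Phi_{a,b}$ is decreasing on $(0,\pi/2)$.

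To analyze $\Phi_{a,b}$ I would pass to the logarithmic derivative, \[ \frac{d}{dx}\log\Phi_{a,b}(x)=a\cot(ax)+b\cot(bx)+(a-b)\tan((a-b)x)-\frac{2}{x}, \] and insert the Mittag-Leffler expansions $\frac1t-\cot t=\sum_{n\ge1}\frac{2t}{n^2\pi^2-t^2}$ and $\tan t=\sum_{n\ge1}\frac{8t}{(2n-1)^2\pi^2-4t^2}$. The condition $\frac{d}{dx}\log\Phi_{a,b}<0$ then becomes \[ \sum_{n\ge1}\frac{4(a-b)^2}{(2n-1)^2\pi^2-4(a-b)^2x^2}<\sum_{n\ge1}\frac{a^2}{n^2\pi^2-a^2x^2}+\sum_{n\ge1}\frac{b^2}{n^2\pi^2-b^2x^2}. \] I would bound the odd-indexed left sum term by term: it suffices to prove, for every odd $m$ with $A=m^2\pi^2$ and $X=x^2\in(0,\pi^2/4)$, the elementary inequality $\frac{4(a-b)^2}{A-4(a-b)^2X}\le\frac{a^2}{A-a^2X}+\frac{b^2}{A-b^2X}$; summing over odd $m$ and discarding the extra even-indexed terms on the right then closes the estimate. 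Clearing the (positive) denominators reduces this to the quadratic positivity \[ \bigl(8ab-3(a^2+b^2)\bigr)A^2-2a^2b^2AX+4a^2b^2(a-b)^2X^2\ge0. \]

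The crux, and the step I expect to be the main obstacle, is proving this quadratic estimate uniformly in $m$ and $X$. Its behavior is governed by the leading coefficient $8ab-3(a^2+b^2)$, which is nonnegative exactly when $a/b\le\frac{4+\sqrt7}{3}\approx2.215$. In every case we require the ratio is at most $2$: for $f_p^c$ with $p\ge3$ one has $a/b=\frac{p+1}{p-1}\le2$, and each sine summand has $a/b=\frac{k+1}{k}\le2$. Hence the leading coefficient is strictly positive, and since $A\ge\pi^2$ dominates $X<\pi^2/4$ while the $X^2$-term is nonnegative, the whole expression stays positive, giving the claimed monotonicity. For $f_2^c$ the ratio is $a/b=3>\frac{4+\sqrt7}{3}$, the leading coefficient turns negative, and the same computation delivers the reversed inequality, which is exactly why \eqref{eqn: cos ineq p>2} flips at $p=2$. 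The remaining degenerate case $p=2$ of the sine inequality I would treat separately through the closed form $f_2^s(x)=\frac14\bigl(\frac{\sin(x/4)}{x/4}\bigr)^2$, which is manifestly decreasing. The delicate points to verify carefully are the validity of the series expansions throughout $(0,\pi/2)$ (so that $(a-b)x<\pi/2$ and $ax,bx<\pi$) and the uniformity of the term-by-term comparison over all odd $m$ simultaneously.
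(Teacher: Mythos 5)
Your decomposition strategy is genuinely different from the paper's and, for the main cases, it works. The paper never splits $f_p^{\triangle}$ into building blocks; it computes $\frac{d^2}{dx^2}\bigl(x^3\frac{d}{dx}f_p^{\triangle}(x)\bigr)$ in closed form (Lemma~\ref{lem: lots of f'''}), reads off its sign, and integrates twice using the vanishing limits at $x=0$. You instead write $f_p^c=\Phi_{\frac{p+1}{2p},\frac{p-1}{2p}}$ and, via a correct telescoping identity, $f_p^s=\sum_{k=1}^{p-1}\Phi_{\frac{k+1}{2p},\frac{k}{2p}}$, then prove each block is decreasing by Mittag--Leffler expansions. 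Your quadratic reduction is algebraically correct, and for every block you actually use one has $a/b\le 2$, hence $8ab-3(a^2+b^2)\ge \tfrac{ab}{2}$; since also $ab<1$, one gets $\bigl(8ab-3(a^2+b^2)\bigr)A^2-2a^2b^2AX\ge A\bigl(\tfrac{ab}{2}\pi^2-\tfrac{a^2b^2}{2}\pi^2\bigr)>0$, which is the quantitative content behind your ``domination'' remark (positivity of the leading coefficient alone would not suffice; the bound $L\ge ab/2$ coming from $a/b\le2$ is what closes it). Discarding the even-indexed cotangent terms then gives the strict inequality, so \eqref{eqn: cos ineq p>2} for $p\ge3$ and \eqref{eqn: sin ineq p>2} for $p\ge2$ are fully proved by your route.

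The genuine gap is the last assertion of the theorem, the reversal of \eqref{eqn: cos ineq p>2} at $p=2$: the claim that ``the same computation delivers the reversed inequality'' is false as stated, because your comparison is structurally one-directional. To prove $\frac{d}{dx}\log\Phi_{a,b}<0$ you compare each odd-indexed tangent term against the two cotangent terms of the same odd index and then \emph{discard} the even-indexed cotangent terms, which is legitimate only because they sit on the favorable side. For $f_2^c$ (where $a=\tfrac34$, $b=\tfrac14$) the inequality you want points the other way, so those even-indexed terms now sit on the unfavorable side: the negative leading coefficient only yields ``tangent sum $>$ odd part of the cotangent sum,'' which does not imply ``tangent sum $>$ full cotangent sum.'' The margin is genuinely thin: as $x\to0$ the tangent sum tends to $\frac{(a-b)^2}{2}=\frac{6}{48}$, the full cotangent sum to $\frac{a^2+b^2}{6}=\frac{5}{48}$, and the even-indexed tail equals $\frac{5}{192}$, i.e.\ more than half of the odd-term margin $\frac{3}{64}$; so any proof must estimate that tail quantitatively rather than drop it. As written, your proposal leaves the $p=2$ cosine case unproved; it can be patched either by such a tail estimate or by the paper's direct computation, which factors $\frac{d^2}{dx^2}\bigl(x^3\frac{d}{dx}f_2^c(x)\bigr)=-\frac{x}{16}\sec^4\bigl(\frac{x}{2}\bigr)\sin\bigl(\frac{x}{2}\bigr)(\cos x-2)(\cos x+5)>0$ and integrates twice.
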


The remainder of this paper is organized as follows. Section~\ref{sec: proof of the main results} is devoted to the proof of Theorem~\ref{thm: Daurizio-Sandor ineq generalized} and an alternative proof of Theorem~\ref{thm: Daurizio-Sandor ineq original}. In Section~\ref{sec: D'Aurizio-Sandor inequalities for hyperbolic functions}, we establish analogue of Theorem~\ref{thm: Daurizio-Sandor ineq generalized} for hyperbolic functions. As an application of Theorem~\ref{thm: Daurizio-Sandor ineq generalized}, we apply in Section~\ref{sec: application} inequality \eqref{eqn: sin ineq p>2} to the Chebyshev polynomials of the second kind and establish a trigonometric inequality. 

%In Section~\ref{sec: Concluding Remarks}, we propose some open problems concerning the NBMP. Finally, some exact traveling wave solutions and the solutions of a system of algebraic equations needed in the proof of Theorem~\ref{thm: NBMP for n species} are given in the Appendix (Section~\ref{sec: appendix}).

\section{Proof of the main results}\label{sec: proof of the main results}

At first we will prove the following lemma. The lemma provides expressions of the higher-order derivative  $\frac{d^2}{dx^2}(x^3\frac{d}{dx} f_{p}^{\triangle}(x))$ involving $f_{p}^{\triangle}(x)$ $(\triangle=c, s)$, which are helpful in proving Theorem~\ref{thm: Daurizio-Sandor ineq generalized}. We note that the sign of $\frac{d^2}{dx^2}(x^3\frac{d}{dx} f_{p}^{\triangle}(x))$ plays a crucial role in proving Theorem~\ref{thm: Daurizio-Sandor ineq generalized}.

\begin{lemma}\label{lem: lots of f'''}
Let $0<x<\pi/2$ and $k=1,2,3,\cdots$. Then when $p\in\mathbb{R}$ and $p\neq0$, we have%\eqref{eqn: f''' cos} \eqref{eqn: f''' sin}
\begin{description}
  \item[$(i)$] 
\begin{align}\label{eqn: f''' cos} \nonumber
\frac{d^2}{dx^2}& \left(x^3\frac{d}{dx} f_{p}^{c}(x)\right)
=-\frac{x\,\csc ^4\left(\frac{x}{p}\right)}{8\,p^3}
\bigg(   
\left(p+1\right)^3 
   \sin \Big(x-\frac{3 x}{p}\Big)+\left(p-1\right)^3 \sin 
   \Big(x+\frac{3 x}{p}\Big)\\ 
+& \left(3 p^3+3 p^2-15 p-23\right)
   \sin \Big(x-\frac{x}{p}\Big)+\left(3 p^3-3 p^2-15 p+23\right) 
   \sin\Big(x+\frac{x}{p}\Big)\bigg);
\end{align} 
%\begin{align}\label{eqn: f''' cos} \nonumber
%\frac{d^2}{dx^2} \left(x^3\frac{d}{dx} f_{p}^{c}(x)\right)
%=-\frac{x\,\csc ^4\left(\frac{x}{p}\right)}{8\,p^3}
%\bigg(   
%&\left(p+1\right)^3 
%   \sin \Big(x-\frac{3 x}{p}\Big)+\left(p-1\right)^3 \sin 
%   \Big(x+\frac{3 x}{p}\Big)+\\ \notag
%& \left(3 p^3+3 p^2-15 p-23\right)
%   \sin \Big(x-\frac{x}{p}\Big)+\\
%& \left(3 p^3-3 p^2-15 p+23\right) 
%   \sin\Big(x+\frac{x}{p}\Big)\bigg)   
%\end{align} 
  \item[$(ii)$] 
\begin{align}\label{eqn: f''' sin} \nonumber
\frac{d^2}{dx^2}& \left(x^3\frac{d}{dx} f_{p}^{s}(x)\right)
=\frac{x\,\csc ^4\left(\frac{x}{p}\right)}{8\,p^3}
\bigg(   
\left(p+1\right)^3 
   \sin \Big(x-\frac{3 x}{p}\Big)-\left(p-1\right)^3 \sin 
   \Big(x+\frac{3 x}{p}\Big)\\ 
+& \left(-3 p^3-3 p^2+15 p+23\right)
   \sin \Big(x-\frac{x}{p}\Big)+\left(3 p^3-3 p^2-15 p+23\right) 
   \sin\Big(x+\frac{x}{p}\Big)\bigg). 
\end{align} 
In particular,
  \item[$(iii)$] when $p=2\,k$, %we have \eqref{eqn: 2k sin}
\begin{equation}\label{eqn: 2k sin}
\frac{d^2}{dx^2} \left(x^3\frac{d}{dx} f_{p}^{s}(x)\right)=-\frac{x}{4\,k^3}\sum_{j=0}^{k-1} (2\,j+1)^3\,\sin\left(\frac{2\,j+1}{2\,k}\,x\right);
\end{equation}
  \item[$(iv)$] when $p=2\,k+1$, %we have \eqref{eqn: 2k+1 cos}, \eqref{eqn: 2k+1 sin}
\begin{align}
\label{eqn: 2k+1 cos}
\frac{d^2}{dx^2} \left(x^3\frac{d}{dx} f_{p}^{c}(x)\right)=&-\frac{16\,x}{(2\,k+1)^3}\sum_{j=1}^{k} j^3\,\sin\left(\frac{2\,j}{2\,k+1}\,x\right)\,(-1)^{j-1}, \\
\label{eqn: 2k+1 sin}
\frac{d^2}{dx^2} \left(x^3\frac{d}{dx} f_{p}^{s}(x)\right)=&-\frac{16\,x}{(2\,k+1)^3}\sum_{j=1}^{k} j^3\,\sin\left(\frac{2\,j}{2\,k+1}\,x\right).
\end{align}
 \item[$(v)$] For $\triangle=c, s$ and $p\in\mathbb{R}\setminus\{0\}$, 
\begin{equation}
 \displaystyle \lim_{x\to0} \frac{d}{dx}\left(x^3\frac{d}{dx} f_{p}^{\triangle}(x)\right)=\lim_{x\to0} x^3\frac{d}{dx} f_{p}^{\triangle}(x)=0.
\end{equation}
\end{description}
\end{lemma}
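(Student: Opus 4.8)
The organizing idea is that the combination $x^{3}\frac{d}{dx}f_{p}^{\triangle}(x)$ is built exactly to remove the denominator $x^{2}$. Write $f_{p}^{\triangle}(x)=g_{p}^{\triangle}(x)/x^{2}$ with $g_{p}^{c}(x)=1-\cos x/\cos(x/p)$ and $g_{p}^{s}(x)=p-\sin x/\sin(x/p)$. A one-line computation gives $x^{3}\frac{d}{dx}f_{p}^{\triangle}=x\,(g_{p}^{\triangle})'-2g_{p}^{\triangle}$; differentiating this twice, the lower-order terms telescope and leave
\[
\frac{d^{2}}{dx^{2}}\Bigl(x^{3}\frac{d}{dx}f_{p}^{\triangle}(x)\Bigr)=x\,\bigl(g_{p}^{\triangle}\bigr)'''(x).
\]
Hence the entire lemma reduces to understanding the third derivatives of $\cos x/\cos(x/p)$ and $\sin x/\sin(x/p)$, with the factor $x$ merely carried along.

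For $(i)$ and $(ii)$ I would evaluate $(g_{p}^{\triangle})'''$ by the quotient rule, reducing it to a single fraction whose denominator is $\cos^{4}(x/p)$ in the cosine case and $\sin^{4}(x/p)$ in the sine case; this is exactly where the $\sec^{4}$, respectively $\csc^{4}$, prefactor arises. The numerator is a sum of products of a trigonometric function of $x$ with one of $x/p$ or $3x/p$, and the product-to-sum formulas turn these into the four sines $\sin(x\pm x/p)$ and $\sin(x\pm 3x/p)$; collecting like terms produces the cubic coefficients $(p\pm1)^{3}$ and $(3p^{3}\pm3p^{2}-15p\mp23)$. I expect this collection step to be the principal obstacle: it is routine but heavily error-prone, and the cubic weights must come out exactly. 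A stringent check is available from part $(v)$, which forces the whole expression to be $O(x^{2})$ near $0$; since $\csc^{4}(x/p)$ blows up like $x^{-4}$, the four-term bracket in the sine case must vanish to order $x^{5}$ at the origin, which tightly constrains the coefficients.

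For $(iii)$ and $(iv)$ I would not specialize the cumbersome identities $(i)$–$(ii)$ but instead substitute the integer value of $p$ directly into the reduction, using the finite (Dirichlet-kernel) expansions
\[
\frac{\sin(2k\theta)}{\sin\theta}=2\sum_{j=0}^{k-1}\cos\bigl((2j+1)\theta\bigr),\qquad
\frac{\sin((2k+1)\theta)}{\sin\theta}=1+2\sum_{j=1}^{k}\cos(2j\theta),
\]
together with
\[
\frac{\cos((2k+1)\theta)}{\cos\theta}=(-1)^{k}+2\sum_{j=1}^{k}(-1)^{k-j}\cos(2j\theta).
\]
Putting $\theta=x/p$ turns $g_{p}^{\triangle}$ into a constant plus a finite cosine sum in $x$, and since $\frac{d^{3}}{dx^{3}}\cos(ax)=a^{3}\sin(ax)$, three differentiations followed by multiplication by $x$ deliver the claimed sine sums with weights $(2j+1)^{3}$ or $j^{3}$. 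The one delicate point is the alternating sign in the cosine expansion, which depends on both $k$ and $j$; I would pin the pattern $(-1)^{k-j}$ down by testing $k=1,2$ before reading off the result.

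Finally, $(v)$ follows from the same reduction. Each $g_{p}^{\triangle}$ is an even function, smooth at the origin (the apparent singularity of $\sin x/\sin(x/p)$ at $0$ is removable), with $g_{p}^{\triangle}(0)=0$; hence its Taylor series has the form $g_{p}^{\triangle}(x)=a_{2}x^{2}+a_{4}x^{4}+\cdots$. Substituting this into $x^{3}\frac{d}{dx}f_{p}^{\triangle}=x\,(g_{p}^{\triangle})'-2g_{p}^{\triangle}$ and into its derivative $x\,(g_{p}^{\triangle})''-(g_{p}^{\triangle})'$ shows that both are $O(x^{3})$ as $x\to0$, so each limit equals $0$.
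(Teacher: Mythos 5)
Your proposal is correct, and its backbone --- writing $g_{p}^{\triangle}(x)=x^{2}f_{p}^{\triangle}(x)$, so that $x^{3}\frac{d}{dx}f_{p}^{\triangle}=x\,(g_{p}^{\triangle})'-2g_{p}^{\triangle}$ and hence
\begin{equation*}
\frac{d^{2}}{dx^{2}}\Bigl(x^{3}\frac{d}{dx} f_{p}^{\triangle}(x)\Bigr)=x\,(g_{p}^{\triangle})'''(x)
\end{equation*}
--- is a genuine organizational improvement over the paper's proof. The paper obtains $(i)$, $(ii)$, $(v)$ by unstated ``elementary Calculus'' computations, and for $(iii)$ works backward: it rewrites the claimed sine sum as $-x\frac{d^{3}}{dx^{3}}\bigl(\sin x/\sin(x/(2k))\bigr)$ by summing $\sum_{j=0}^{k-1}\cos\bigl(\tfrac{2j+1}{2k}x\bigr)$ via Euler's formula and a geometric series, and then verifies the identity $\frac{d^{2}}{dx^{2}}(x^{3}\frac{d}{dx}f_{p}^{s})=-x\frac{d^{3}}{dx^{3}}\bigl(\sin x/\sin(x/(2k))\bigr)$ by a separate calculation ``using $(ii)$''. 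Your reduction establishes that identity structurally, for all $p\neq0$ and both cases at once; then $(iii)$ and $(iv)$ are term-by-term third derivatives of finite cosine sums (your first Dirichlet-kernel identity is precisely what the paper's Euler-formula computation proves; the other two, which the paper skips as ``similar'', follow from the same geometric-series device), and $(v)$ gets an actual argument --- parity, removable singularity, $g_{p}^{\triangle}(0)=0$ --- where the paper only asserts the limits. The cost is unchanged for $(i)$ and $(ii)$: the quotient-rule-plus-product-to-sum computation is the same in both approaches, and you flag its error-proneness honestly and supply a sensible consistency check.

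One caveat, which is a defect of the statement rather than of your proof: carried out correctly, your derivation will \emph{not} reproduce the lemma verbatim in two places, and you should say so explicitly. First, in $(i)$ the prefactor must be $\sec^{4}(x/p)$, not $\csc^{4}(x/p)$, exactly as your quotient-rule analysis predicts (the denominator is $\cos^{4}(x/p)$); the paper itself uses $\sec^{4}(x/2)$ when it applies $(i)$ with $p=2$ in the proof of Theorem~\ref{thm: Daurizio-Sandor ineq original}, so the $\csc^{4}$ in the lemma is a typo. Second, in \eqref{eqn: 2k+1 cos} the correct sign pattern is $(-1)^{k-j}$ --- the one produced by your expansion $\cos((2k+1)\theta)/\cos\theta=(-1)^{k}+2\sum_{j=1}^{k}(-1)^{k-j}\cos(2j\theta)$ --- not the stated $(-1)^{j-1}$; the two agree only when $k$ is odd. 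For $k=2$ ($p=5$) one has $\cos5\theta/\cos\theta=1-2\cos2\theta+2\cos4\theta$, which gives $x\,(g_{5}^{c})'''=-\tfrac{16x}{125}\bigl(-\sin\tfrac{2x}{5}+8\sin\tfrac{4x}{5}\bigr)$, opposite in sign to the paper's formula. (This slip is harmless downstream, since the proof of Theorem~\ref{thm: Daurizio-Sandor ineq generalized} treats the cosine case through $(i)$, not through \eqref{eqn: 2k+1 cos}.) Your plan to pin down the alternating sign by testing $k=1,2$ is exactly the right safeguard; carry it out and state the corrected formulas.
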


\begin{proof}
%$(i)$ and $(ii)$ follows directly from calculations using trigonometric addition formulas. 
$(i)$, $(ii)$ and $(v)$ follows directly from calculations using elementary Calculus. In particular, trigonometric addition formulas are used in proving $(i)$ and $(ii)$. To prove \eqref{eqn: 2k sin}, we claim
\begin{equation}%\label{eqn: 2k sin}
-\frac{x}{4\,k^3}\sum_{j=0}^{k-1} (2\,j+1)^3\,\sin\left(\frac{2\,j+1}{2\,k}\,x\right)=-\,x\,\frac{d^3}{dx^3} \left( \frac{\sin x}{\sin\left(\frac{x}{2\,k}\right)}\right).
\end{equation}
%\begin{equation}%\label{eqn: 2k sin}
%\frac{1}{4\,k^3}\sum_{j=0}^{k-1} (2\,j+1)^3\,\sin\left(\frac{2\,j+1}{2\,k}\,x\right)=\frac{d^3}{dx^3} \left( \frac{\sin x}{\sin\left(\frac{x}{2\,k}\right)}\right)
%\end{equation}
Indeed, we rewrite
\begin{equation}\label{eqn: trans f''' p=2k}
\frac{1}{4\,k^3}\sum_{j=0}^{k-1} (2\,j+1)^3\,\sin\left(\frac{2\,j+1}{2\,k}\,x\right)
= 2\frac{d^3}{dx^3} \left(\sum_{j=0}^{k-1}\cos\left(\frac{2\,j+1}{2\,k}\,x\right)\right).  
\end{equation}
On the other hand, making use of Euler's formula $e^{i\,z}=\cos z+i\,\sin z$ leads to an alternative expression of the left-hand side of \eqref{eqn: trans f''' p=2k}:
\begin{align}
\label{}
\sum_{j=0}^{k-1}\cos\left(\frac{2\,j+1}{2\,k}\,x\right)
&= \sum_{j=0}^{k-1} \displaystyle\Re\left\{ e^{i(\frac{x}{2\,k}+\frac{x}{k}\,j)}\right\}
  = \Re\left\{e^{i\frac{x}{2\,k}}\sum_{j=0}^{k-1} \displaystyle \left(e^{i\frac{x}{k}}\right)^j\right\}   \\
&= \Re\left\{e^{i\frac{x}{2\,k}}\displaystyle\frac{1-e^{i\,x}}{1-e^{i\frac{x}{k}}} \right\}
  = \Re\left\{e^{i\frac{x}{2\,k}}\displaystyle\frac{e^{\frac{i\,x}{2}}(e^{-\frac{i\,x}{2}}-e^{\frac{i\,x}{2}})}{e^{\frac{i\,x}{2\,k}}(e^{-\frac{i\,x}{2\,k}}-e^{\frac{i\,x}{2\,k}})} \right\}\\
&= \Re\left\{ e^{\frac{i\,x}{2}}\,\frac{\sin \left(\frac{x}{2}\right)}{\sin \left(\frac{x}{2\,k}\right)}\right\}
  =\cos \left(\frac{x}{2}\right)\frac{\sin \left(\frac{x}{2}\right)}{\sin \left(\frac{x}{2\,k}\right)}=\frac{\sin x}{2\,\sin \left(\frac{x}{2\,k}\right)},
\end{align}
where $\Re\left\{z\right\}$ is the real part of $z$ and $i=\sqrt{-1}$. Now it suffices to show
\begin{equation}%\label{eqn: 2k sin}
\frac{d^2}{dx^2} \left(x^3\frac{d}{dx} f_{p}^{s}(x)\right)=-x\frac{d^3}{dx^3} \left( \frac{\sin x}{\sin\left(\frac{x}{2\,k}\right)}\right).
\end{equation}
Using \eqref{eqn: f''' sin} in $(ii)$, this can be achieved by straightforward calculations. Thus $(iii)$ is true. The proof of $(iv)$ is similar, and we omit the details. We complete the proof of Lemma~\ref{lem: lots of f'''}.\qed

%\begin{align}
%\sum_{j=0}^{k-1}\cos\left(\frac{2\,j+1}{2\,k}\,x\right)
%&= \sum_{j=0}^{k-1} \displaystyle\Re( e^{i(\frac{x}{2\,k}+\frac{x}{k}\,j)})  \\
%&= \Re\left(e^{i\frac{x}{2\,k}}\sum_{j=0}^{k-1} \displaystyle \left(e^{i\frac{x}{k}}\right)^j\right) \\
%&= \Re\left(e^{i\frac{x}{2\,k}}\displaystyle\frac{1-e^{i\,x}}{1-e^{i\frac{x}{k}}} \right) \\
%&= \Re\left(e^{i\frac{x}{2\,k}}\displaystyle\frac{e^{\frac{i\,x}{2}}(e^{-\frac{i\,x}{2}}-e^{\frac{i\,x}{2}})}{e^{\frac{i\,x}{2\,k}}(e^{-\frac{i\,x}{2\,k}}-e^{\frac{i\,x}{2\,k}})} \right)\\
%&= \Re\left( e^{\frac{i\,x}{2}}\,\frac{\sin \left(\frac{x}{2}\right)}{\sin \left(\frac{x}{2\,k}\right)}\right)\\
%&=\cos (\frac{x}{2})\,\frac{\sin \left(\frac{x}{2}\right)}{\sin \left(\frac{x}{2\,k}\right)}=\frac{\sin \left(\frac{x}{2}\right)}{2\,\sin \left(\frac{x}{2\,k}\right)}
%\end{align}

\end{proof}

We provide here an alternative proof of the two double inequalities in Theorem~\ref{thm: Daurizio-Sandor ineq original}.

\begin{proof}[Proof of Theorem~\ref{thm: Daurizio-Sandor ineq original}]
 To this end, we show that for $x\in(0,\pi/2)$, $f_{2}^{c}(x)=\frac{1-\frac{\cos x}{\cos \frac{x}{2}}}{x^2}$ is strictly increasing while $f_{2}^{s}(x)=\frac{2-\frac{\sin x}{\sin \frac{x}{2}}}{x^2}$ is strictly decreasing. These lead to the desired inequalities since it is easy to see that %$\displaystyle \lim_{x\to0} f_{2}^{c}(x)=\frac{3}{8}$, $\displaystyle \lim_{x\to\pi/2} f_{2}^{c}(x)=\frac{4}{\pi^2}$, $\displaystyle \lim_{x\to0} f_{2}^{s}(x)=\frac{1}{4}$ and $\displaystyle \lim_{x\to\pi/2} f_{2}^{s}(x)=\frac{4}{\pi^2}(2-\sqrt{2})$. 
\begin{alignat}{4}%\label{eqn: soln of algebraic eqn tanh method type II}
\lim_{x\to0}f_{2}^{c}(x) & =\frac{3}{8}, 
& \quad
\lim_{x\to\pi/2}f_{2}^{c}(x) & =\frac{4}{\pi^2}, 
& \quad%\notag
\\
\lim_{x\to0}f_{2}^{s}(x) & =\frac{1}{4}, 
& \quad
\lim_{x\to\pi/2}f_{2}^{s}(x) & =\frac{4}{\pi^2}(2-\sqrt{2}). 
& \quad%\notag
\end{alignat}
To see $f_{2}^{c}(x)$ is strictly increasing, we employ \eqref{eqn: f''' cos} in Lemma~\ref{lem: lots of f'''} to obtain
\begin{align}\label{eqn: f''' lemma1}\nonumber
\frac{d^2}{dx^2}\left(x^3\frac{d}{dx} f_{2}^{c}(x)\right)
&=-\frac{x}{64} \sec^4\left(\frac{x}{2}\right) \left(-44 \sin\left(\frac{x}{2}\right)+5 \sin \left(\frac{3 x}{2}\right)+\sin \left(\frac{5 x}{2}\right)\right)     \\
&=-\frac{x}{16} \sec^4\left(\frac{x}{2}\right)
\sin \left(\frac{x}{2}\right) (\cos x-2)(\cos x+5)>0. %\notag
\end{align}
As $\lim_{x\to0} \frac{d}{dx}\left(x^3\frac{d}{dx} f_{2}^{c}(x)\right)=0$, it follows that $\frac{d}{dx}\left(x^3\frac{d}{dx} f_{2}^{c}(x)\right)>0$. We are led to $x^3\frac{d}{dx} f_{2}^{c}(x)>0$ or $\frac{d}{dx} f_{2}^{c}(x)>0$ since $\lim_{x\to0} \left(x^3\frac{d}{dx} f_{2}^{c}(x)\right)=0$. This that shows $f_{2}^{c}(x)$ is strictly increasing.

By using \eqref{eqn: 2k sin} in Lemma~\ref{lem: lots of f'''}, we have
\begin{equation}
\frac{d^2}{dx^2}\left(x^3\frac{d}{dx} f_{2}^{s}(x)\right)=-\frac{x}{4}\,\sin \left(\frac{x}{2}\right)<0,
\end{equation}
from which we infer that $\frac{d}{dx}\left(x^3\frac{d}{dx} f_{2}^{s}(x)\right)<0$ since $\lim_{x\to0} \frac{d}{dx}\left(x^3\frac{d}{dx} f_{2}^{s}(x)\right)=0$ by $(v)$ of Lemma~\ref{lem: lots of f'''}. Then
\begin{equation} 
\displaystyle \frac{d}{dx}\left(x^3\frac{d}{dx} f_{2}^{s}(x)\right)<0
\end{equation}
together with the fact $\lim_{x\to0} \left(x^3\frac{d}{dx} f_{2}^{s}(x)\right)=0$ from $(v)$ of Lemma~\ref{lem: lots of f'''} yields $x^3\frac{d}{dx} f_{2}^{s}(x)<0$ or $\frac{d}{dx} f_{2}^{s}(x)<0$. Thus we have shown that $f_{2}^{s}(x)$ is strictly decreasing. This completes the proof of the theorem.\qed
\end{proof}

We are now in the position to give the proof of Theorem~\ref{thm: Daurizio-Sandor ineq generalized}.

\begin{proof}[Proof of Theorem~\ref{thm: Daurizio-Sandor ineq generalized}]
The proof of the case when $p=2$ has been given in Theorem~\ref{thm: Daurizio-Sandor ineq generalized}. For $p\ge3$, we prove the desired inequalities by showing that $\frac{d}{dx} f_{p}^{\triangle}(x)<0$ for $\triangle=c, s$. Due to $(i)$ of Lemma~\ref{lem: lots of f'''}, we see that $\frac{d^2}{dx^2}\left(x^3\frac{d}{dx} f_{p}^{c}(x)\right)<0$ for $p\ge3$. Instead of employing $(ii)$ of Lemma~\ref{lem: lots of f'''}, we use \eqref{eqn: 2k sin} and \eqref{eqn: 2k+1 sin} in Lemma~\ref{lem: lots of f'''} to conclude that $\frac{d^2}{dx^2}\left(x^3\frac{d}{dx} f_{p}^{s}(x)\right)<0$. Thus we have for $\triangle=c, s$,
\begin{equation}
\frac{d^2}{dx^2}\left(x^3\frac{d}{dx} f_{p}^{\triangle}(x)\right)<0.
\end{equation}
Because of the first vanishing limit in $(v)$ of Lemma~\ref{lem: lots of f'''}, it follows that 
\begin{equation}
\frac{d}{dx}\left(x^3\frac{d}{dx} f_{p}^{\triangle}(x)\right)<0,
\end{equation}
which, together with the fact that the second limit in $(v)$ of Lemma~\ref{lem: lots of f'''} vanishes, implies that $x^3\frac{d}{dx} f_{p}^{\triangle}(x)<0$ or $\frac{d}{dx} f_{p}^{\triangle}(x)<0$ for $\triangle=c, s$. It remains to find the following limits:
\begin{alignat}{4}%\label{eqn: soln of algebraic eqn tanh method type II}
\lim_{x\to0}f_{p}^{c}(x) & =\frac{p^2-1}{2\,p^2}, 
& \quad
\lim_{x\to\pi/2}f_{p}^{c}(x) & =\frac{4}{\pi^2}, 
& \quad%\notag
\\
\lim_{x\to0}f_{p}^{s}(x) & =\frac{p^2-1}{6\,p}, 
& \quad
\lim_{x\to\pi/2}f_{p}^{s}(x) & =\frac{4}{\pi^2}\left(p-\csc \left(\frac{\pi}{2p}\right)\right). 
& \quad%\notag
\end{alignat}
We immediately have
\begin{equation}%\label{eqn: cos ineq p>2}
\frac{4}{\pi^2}=\lim_{x\to\pi/2}f_{p}^{c}(x)<\frac{1-\displaystyle\frac{\cos x}{\cos \frac{x}{p}}}{x^2}<\lim_{x\to0}f_{p}^{c}(x)=\frac{p^2-1}{2\,p^2}
\end{equation}
and 
\begin{equation}%\label{eqn: sin ineq p>2}
\frac{4}{\pi^2}\left(p-\csc \left(\frac{\pi}{2p}\right)\right)=\lim_{x\to\pi/2}f_{p}^{s}(x)<\frac{\displaystyle p-\frac{\sin x}{\sin \frac{x}{p}}}{x^2}<\lim_{x\to0}f_{p}^{s}(x)=\frac{p^2-1}{6\,p}.
\end{equation}
The proof is completed.\qed
%\begin{eqnarray}
%\lim_{x\to0}f_{p}^{c}(x)& = & \frac{(p-1)(p+1)}{2\,p^2}, \\
%\lim_{x\to0}f_{p}^{s}(x) & = & \frac{p^2-1}{6\,p}.
%\end{eqnarray}

\end{proof}

\section{Generalized D'Aurizio-S\'andor inequalities for hyperbolic functions}\label{sec: D'Aurizio-Sandor inequalities for hyperbolic functions}

In this section, we show an analogue of Theorem~\ref{thm: Daurizio-Sandor ineq generalized} for the case of hyperbolic functions holds true. Let
\begin{align}
h_{p}^{c}(x)& = \frac{1-\displaystyle\frac{\cosh x}{\cosh \frac{x}{p}}}{x^2}, \\
h_{p}^{s}(x)& = \frac{p-\displaystyle\frac{\sinh x}{\sinh \frac{x}{p}}}{x^2}.
\end{align}
Following the same arguments for proving Lemma~\ref{lem: lots of f'''}, it can be shown that Lemma~\ref{lem: lots of f'''} with $\cos x$, $\sin x$ and $f_{p}^{\triangle}(x)$ $(\triangle=c, s)$ replaced by $\cosh x$, $\sinh x$ and $h_{p}^{\triangle}(x)$ $(\triangle=c, s)$ respectively, remains true. It follows that we can prove $\frac{d}{dx} h_{p}^{\triangle}(x)<0$ for $\triangle=c, s$ as in the proof of Theorem~\ref{thm: Daurizio-Sandor ineq generalized}. It remains to calculate the following limits:
\begin{alignat}{4}%\label{eqn: soln of algebraic eqn tanh method type II}
\lim_{x\to0}f_{p}^{c}(x) & =\frac{1-p^2}{2\,p^2}, 
& \quad
\lim_{x\to\pi/2}f_{p}^{c}(x) & =\frac{4}{\pi^2}\left(1-\cosh \left(\frac{\pi }{2}\right)\sech\left(\frac{\pi }{2 p}\right)\right), 
& \quad%\notag
\\
\lim_{x\to0}f_{p}^{s}(x) & =\frac{1-p^2}{6\,p}, 
& \quad
\lim_{x\to\pi/2}f_{p}^{s}(x) & =\frac{4}{\pi ^2}\left(p-\sinh \left(\frac{\pi }{2}\right)\csch\left(\frac{\pi }{2p}\right)\right). 
& \quad%\notag
\end{alignat}
%now in the position
Thus, we have the following analogue of Theorem~\ref{thm: Daurizio-Sandor ineq generalized} for $\cosh x$ and $\sinh x$. 
\begin{theorem}\label{thm: Daurizio-Sandor ineq generalized hyperbolic fcn}
Let $0<x<\pi/2$. Then the two double inequalities
\begin{equation}\label{eqn: cosh ineq p>2}
\frac{4}{\pi^2}\left(1-\cosh \left(\frac{\pi }{2}\right)\sech\left(\frac{\pi }{2 p}\right)\right)<\frac{1-\displaystyle\frac{\cosh x}{\cosh \frac{x}{p}}}{x^2}<\frac{1-p^2}{2\,p^2}
\end{equation}
and 
\begin{equation}\label{eqn: sinh ineq p>2}
\frac{4}{\pi ^2}\left(p-\sinh \left(\frac{\pi }{2}\right)\csch\left(\frac{\pi }{2p}\right)\right)<\frac{\displaystyle p-\frac{\sinh x}{\sinh \frac{x}{p}}}{x^2}<\frac{1-p^2}{6\,p}
\end{equation}
hold for $p=3, 4 , 5,\cdots$. In particular, the double inequality \eqref{eqn: cosh ineq p>2} is reversed when $p=2$ while the double inequality \eqref{eqn: sinh ineq p>2} remains true when $p=2$.
\end{theorem}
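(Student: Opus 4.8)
The plan is to run the argument of the proof of Theorem~\ref{thm: Daurizio-Sandor ineq generalized} line by line, transporting everything from the circular to the hyperbolic setting through the substitution $x\mapsto ix$. Since $\cosh x=\cos(ix)$ and $\sinh x=-i\,\sin(ix)$, one has $h_{p}^{c}(x)=-f_{p}^{c}(ix)$ and $h_{p}^{s}(x)=-f_{p}^{s}(ix)$. Differentiating these relations by the chain rule and using $\csc(ix/p)=-i\,\csch(x/p)$ and $\sec(ix/p)=\sech(x/p)$, all powers of $i$ cancel and every formula of Lemma~\ref{lem: lots of f'''} transfers intact: $\cos,\sin$ become $\cosh,\sinh$, the trigonometric prefactors become the corresponding positive hyperbolic ones, the rational coefficients in the brackets are unchanged, and the two vanishing limits in $(v)$ persist. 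In particular the clean sum formulas \eqref{eqn: 2k sin} and \eqref{eqn: 2k+1 sin} become negative multiples of $\sum_{j}(2j+1)^3\sinh(\tfrac{2j+1}{2k}x)$ and $\sum_{j}j^3\sinh(\tfrac{2j}{2k+1}x)$.

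Granting this transfer, I would first dispose of $p\ge3$. For the $\sinh$ case the matter is even simpler than in Theorem~\ref{thm: Daurizio-Sandor ineq generalized}: every $\sinh$ appearing in the transferred \eqref{eqn: 2k sin}, \eqref{eqn: 2k+1 sin} has a positive argument, so $\frac{d^2}{dx^2}\!\left(x^3\frac{d}{dx}h_{p}^{s}(x)\right)<0$ with no constraint on the size of the arguments. For the $\cosh$ case I would use the transferred form of \eqref{eqn: f''' cos}, the same four-term bracket with $\sin$ replaced by $\sinh$: for $p\ge3$ the four coefficients $(p\pm1)^3$, $3p^3+3p^2-15p-23$, $3p^3-3p^2-15p+23$ are all positive, and each argument $x-\tfrac{3x}{p}$, $x+\tfrac{3x}{p}$, $x\pm\tfrac{x}{p}$ is nonnegative on $(0,\pi/2)$ (the smallest, $x-\tfrac{3x}{p}=x\tfrac{p-3}{p}$, is $\ge0$ exactly because $p\ge3$), so the bracket is positive and $\frac{d^2}{dx^2}\!\left(x^3\frac{d}{dx}h_{p}^{c}(x)\right)<0$. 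Integrating these sign facts upward through the vanishing limits of $(v)$, exactly as in Theorem~\ref{thm: Daurizio-Sandor ineq generalized}, yields $\frac{d}{dx}h_{p}^{\triangle}(x)<0$, so $h_{p}^{c}$ and $h_{p}^{s}$ are strictly decreasing. The bounds in \eqref{eqn: cosh ineq p>2} and \eqref{eqn: sinh ineq p>2} then follow from the four boundary limits recorded before the theorem, the $x\to\pi/2$ ones by direct substitution and the $x\to0$ ones from the second-order Taylor expansions of $\cosh$ and $\sinh$; these are routine.

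The hard part is the exceptional value $p=2$ for the $\cosh$ inequality. The $\sinh$ half is painless: the transferred \eqref{eqn: 2k sin} with $k=1$ gives $\frac{d^2}{dx^2}\!\left(x^3\frac{d}{dx}h_{2}^{s}(x)\right)=-\tfrac{x}{4}\sinh(\tfrac{x}{2})<0$, so $h_{2}^{s}$ is decreasing and \eqref{eqn: sinh ineq p>2} survives verbatim. For $\cosh$, however, the transferred counterpart of the factored identity in the proof of Theorem~\ref{thm: Daurizio-Sandor ineq original} reads
\[
\frac{d^2}{dx^2}\!\left(x^3\frac{d}{dx}h_{2}^{c}(x)\right)=-\frac{x}{16}\,\sech^4\!\left(\frac{x}{2}\right)\sinh\!\left(\frac{x}{2}\right)(\cosh x-2)(\cosh x+5),
\]
and here the factor $\cosh x-2$ \emph{changes sign} at $x=\operatorname{arccosh}2\in(0,\pi/2)$, in contrast to its circular counterpart $\cos x-2$, which is strictly negative on the whole interval. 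Thus the ``constant sign, integrate upward'' mechanism fails and monotonicity of $h_{2}^{c}$ is no longer automatic.

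I expect this to be the genuine obstacle, and the way around it is to prove directly that $x^3\frac{d}{dx}h_{2}^{c}(x)>0$ on $(0,\pi/2)$. Since the displayed second derivative is positive on $(0,\operatorname{arccosh}2)$ and negative on $(\operatorname{arccosh}2,\pi/2)$, the quantity $\frac{d}{dx}\!\left(x^3\frac{d}{dx}h_{2}^{c}(x)\right)$ rises from $0$ to a single interior maximum and then falls; verifying that it is still nonnegative at $x=\pi/2$ forces it to be nonnegative throughout, whence $x^3\frac{d}{dx}h_{2}^{c}(x)$ increases away from $0$ and stays positive. This makes $h_{2}^{c}$ strictly increasing, so the two bounds in \eqref{eqn: cosh ineq p>2} exchange roles and the inequality holds in the reversed sense, as claimed. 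Establishing the endpoint sign at $x=\pi/2$ — equivalently, producing one explicit lower estimate for $\frac{d}{dx}\!\left(x^3\frac{d}{dx}h_{2}^{c}(x)\right)$ there — is the single computation genuinely new to the hyperbolic case.
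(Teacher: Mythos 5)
Your proposal is correct, and for the bulk of the theorem ($p\ge3$, and the $\sinh$ case at $p=2$) it is essentially the paper's own argument: the paper re-derives the hyperbolic analogue of Lemma~\ref{lem: lots of f'''} by ``following the same arguments,'' whereas you obtain it in one stroke from $h_{p}^{\triangle}(x)=-f_{p}^{\triangle}(ix)$, after which both routes integrate the sign of $\frac{d^2}{dx^2}\bigl(x^3\frac{d}{dx}h_{p}^{\triangle}(x)\bigr)$ upward through the vanishing limits in $(v)$ and finish with the endpoint limits. (Your observation that the hyperbolic case is actually easier for $p\ge3$ — positivity of the arguments suffices, no constraint like $0<\theta<\pi$ is needed — is accurate.)

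The genuinely different, and genuinely necessary, part is your treatment of the $\cosh$ inequality at $p=2$, and here you have exposed a real gap in the paper. The paper's proof of Theorem~\ref{thm: Daurizio-Sandor ineq generalized hyperbolic fcn} never addresses $p=2$ at all: it claims monotonicity ``as in the proof of Theorem~\ref{thm: Daurizio-Sandor ineq generalized},'' but for $p=2$ that proof rests on the factorization in \eqref{eqn: f''' lemma1}, where $\cos x-2<0$ on all of $(0,\pi/2)$, while its hyperbolic transplant carries the factor $\cosh x-2$, which vanishes at $\operatorname{arccosh}2=\ln(2+\sqrt{3})\approx1.317<\pi/2$. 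So the constant-sign mechanism fails exactly where the theorem asserts the reversal, and the paper offers nothing in its place. Your repair is the right one: with $g(x)=\frac{\cosh x}{\cosh(x/2)}=2\cosh\left(\frac{x}{2}\right)-\sech\left(\frac{x}{2}\right)$ and $G(x)=\frac{d}{dx}\bigl(x^3\frac{d}{dx}h_{2}^{c}(x)\bigr)=g'(x)-x\,g''(x)$, the function $G$ starts at $0$, increases on $(0,\operatorname{arccosh}2)$ and decreases thereafter, so $G>0$ on $(0,\pi/2)$ if and only if $G(\pi/2)\ge0$. The one thing your write-up defers is this endpoint check, and it must actually be carried out for the argument to be a proof; it does succeed, but narrowly. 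Writing $t=\pi/4$, one finds
\begin{equation*}
G\left(\tfrac{\pi}{2}\right)=\sinh t+\tfrac12\sech t\tanh t-\tfrac{\pi}{2}\left(\tfrac12\cosh t+\tfrac14\sech t\left(\sech^2 t-\tanh^2 t\right)\right)\approx1.1162-1.0818=0.0344>0,
\end{equation*}
a margin of only about $3\%$, so a rigorous version needs reasonably sharp bounds on $e^{\pi/4}$ rather than crude estimates. With that computation supplied, your proof is complete — and, unlike the paper's, it actually covers the $p=2$ $\cosh$ case that the theorem singles out.
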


%\begin{proof}
%\end{proof}

\section{Application of the generalized D'Aurizio-S\'andor inequalities to the Chebyshev polynomials of the second kinds}\label{sec: application}

%The Chebyshev polynomials of the second kind are denoted  
The first few Chebyshev polynomials of the second kind $U_n(x)$ $(n=0,1,2,\cdots)$ are (\cite{AS2,Chebyshev})
\begin{align}
\label{eqn: Chebyshev poly n=0}
U_0(x)&=1, \\
\label{eqn: Chebyshev poly n=1}
U_1(x)&=2\,x, \\
\label{eqn: Chebyshev poly n=1}
U_2(x)&=4\,x^2-1, \\
\label{eqn: Chebyshev poly n=1}
U_3(x)&=8\,x^3-4\,x, \\
\label{eqn: Chebyshev poly n=1}
U_4(x)&=16\,x^4-12\,x^2+1, \\
\label{eqn: Chebyshev poly n=1}
U_5(x)&=32\,x^5-32\,x^3+6\,x, \\
\label{eqn: Chebyshev poly n=1}
U_6(x)&=64\,x^6-80\,x^4+24\,x^2-1.
\end{align}
In this section, we apply Theorem~\ref{thm: Daurizio-Sandor ineq generalized} to $U_n(x)$ with $x=\cos \theta$. By means of the formula $U_{n}(\cos \theta)=\frac{\sin ((n+1)\,\theta)}{\sin \theta}$, we obtain the following corollary.

\begin{corollary}\label{cor: Daurizio-Sandor ineq applied to Chebyshev poly}
Let $y\in(0,\frac{\pi}{2\,p})$. The double inequality
\begin{equation}%\label{eqn: sin ineq p>2}
\frac{p}{6}\left((1-p^2 )y^2+6\right)<U_{p-1}(\cos y)<p-\frac{4}{\pi^2}\left(p-\csc \left(\frac{\pi}{2p}\right)\right)\,p\,y^2
\end{equation}
holds for $p=2, 3, 4 , 5,\cdots$. 
\end{corollary}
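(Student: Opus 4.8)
The plan is to derive Corollary~\ref{cor: Daurizio-Sandor ineq applied to Chebyshev poly} as a direct specialization of the sine inequality \eqref{eqn: sin ineq p>2}, exploiting the closed form $U_{p-1}(\cos\theta)=\frac{\sin(p\,\theta)}{\sin\theta}$. The crucial observation is that the quotient $\frac{\sin x}{\sin(x/p)}$ occurring in \eqref{eqn: sin ineq p>2} is exactly a Chebyshev value after the substitution $x=p\,y$: then $x/p=y$ and
\[
\frac{\sin x}{\sin\frac{x}{p}}=\frac{\sin(p\,y)}{\sin y}=U_{p-1}(\cos y).
\]
Thus the whole corollary should fall out of \eqref{eqn: sin ineq p>2} by a change of variable together with elementary rearrangement.

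First I would fix the change of variable $x=p\,y$ and verify the range: since \eqref{eqn: sin ineq p>2} holds for $x\in(0,\pi/2)$, the condition $0<p\,y<\pi/2$ is equivalent to $y\in(0,\frac{\pi}{2p})$, which is precisely the hypothesis of the corollary. Under this substitution the central expression in \eqref{eqn: sin ineq p>2} becomes
\[
\frac{p-\frac{\sin x}{\sin\frac{x}{p}}}{x^2}=\frac{p-U_{p-1}(\cos y)}{p^2\,y^2}.
\]
Next I would clear the positive denominator $p^2\,y^2$, which leaves the direction of both inequalities unchanged, and then isolate $U_{p-1}(\cos y)$ by subtracting from $p$; this last step reverses the inequalities, so the right-hand bound of \eqref{eqn: sin ineq p>2} produces the lower bound for $U_{p-1}(\cos y)$ and the left-hand bound produces the upper bound. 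Simplifying $p-\frac{p^2-1}{6p}\cdot p^2 y^2=\frac{p}{6}\big((1-p^2)y^2+6\big)$ gives the stated lower bound, and the same manipulation applied to the left half yields the upper bound.

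Finally, for the range of $p$: although \eqref{eqn: sin ineq p>2} is stated for $p\ge 3$, Theorem~\ref{thm: Daurizio-Sandor ineq generalized} records that the sine double inequality also remains valid at $p=2$, so the corollary holds for all $p=2,3,4,\dots$. There is no genuine obstacle in this argument---it is a single substitution followed by routine algebra---so the only point demanding attention is the bookkeeping of the powers of $p$ introduced by $x^2=p^2y^2$ when clearing denominators, and checking that the endpoint $y\to\frac{\pi}{2p}$ matches the boundary value $x\to\pi/2$ used when the limits in Theorem~\ref{thm: Daurizio-Sandor ineq generalized} were computed.
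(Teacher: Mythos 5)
Your proposal is the same argument as the paper's own proof: isolate $\frac{\sin x}{\sin(x/p)}$ in \eqref{eqn: sin ineq p>2}, substitute $x=p\,y$, and apply the identity $U_{p-1}(\cos y)=\frac{\sin(p\,y)}{\sin y}$, with the case $p=2$ covered by the final sentence of Theorem~\ref{thm: Daurizio-Sandor ineq generalized}. Your handling of the inequality reversal is right, and the lower bound works out exactly as you say: $p-\frac{p^2-1}{6p}\,p^2y^2=\frac{p}{6}\left((1-p^2)y^2+6\right)$.

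However, the point you flagged as ``bookkeeping of the powers of $p$'' is a real issue, and you left it unresolved. Carrying out the substitution honestly on the other half gives
\begin{equation*}
U_{p-1}(\cos y)<p-\frac{4}{\pi^2}\left(p-\csc\left(\frac{\pi}{2p}\right)\right)p^2\,y^2,
\end{equation*}
with $p^2y^2$, whereas the corollary as stated has $p\,y^2$. So the manipulation does not literally ``yield the upper bound''; it yields a strictly stronger one. To conclude the stated inequality you need one further line: the coefficient $\frac{4}{\pi^2}\left(p-\csc\left(\frac{\pi}{2p}\right)\right)$ is positive (by Jordan's inequality $\sin t>\frac{2t}{\pi}$ on $\left(0,\frac{\pi}{2}\right)$ applied with $t=\frac{\pi}{2p}$, one gets $\csc\left(\frac{\pi}{2p}\right)<p$), and $p^2y^2>p\,y^2$ for $p\ge 2$, so the stronger bound implies the stated one. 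For what it is worth, the paper's proof commits the same slip---it writes $p\,y^2$ immediately after substituting---while the subsequent $p=7$ example uses the coefficient $\frac{196}{\pi^2}=\frac{4p^2}{\pi^2}$, i.e.\ the $p^2y^2$ version, so the $p\,y^2$ in the statement is almost certainly a typo; but since the statement you were asked to prove carries $p\,y^2$, the bridging observation above is needed to make your argument complete.
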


\begin{proof}
The double inequality \eqref{eqn: sin ineq p>2} in Theorem~\ref{thm: Daurizio-Sandor ineq generalized} can be written as
\begin{equation}%\label{eqn: sin ineq p>2}
p-\frac{p^2-1}{6\,p}\,x^2<\frac{\sin x}{\sin \frac{x}{p}}<p-\frac{4}{\pi^2}\left(p-\csc \left(\frac{\pi}{2p}\right)\right)\,x^2, \quad x\in(0,\pi/2).
\end{equation}
Letting $x/p=y$, we have
\begin{equation}%\label{eqn: sin ineq p>2}
\frac{p}{6}\left((1-p^2 )y^2+6\right)<\frac{\sin (p\,y)}{\sin y}<p-\frac{4}{\pi^2}\left(p-\csc \left(\frac{\pi}{2p}\right)\right)\,p\,y^2, \quad y\in(0,\frac{\pi}{2\,p}).
\end{equation}
Since $\frac{\sin (p\,y)}{\sin y}=U_{p-1}(\cos y)$, the proof is completed.\qed
\end{proof}

\begin{example}
Letting $p=7$ in Corollary~\ref{cor: Daurizio-Sandor ineq applied to Chebyshev poly} results in the following inequality
\begin{equation}%\label{eqn: sin ineq p>2}
7-56\,y^2<64\,\cos ^6 y-80\,\cos ^4 y+24\,\cos ^2 y-1<7-\frac{196 \left(7-\csc \left(\frac{\pi}{14}\right)\right)}{\pi ^2}\,y^2,
\end{equation}
where $y\in(0,\frac{\pi}{14})\approx (0,0.2244)$ and $\frac{196 \left(7-\csc \left(\frac{\pi}{14}\right)\right)}{\pi ^2}\approx 49.7673$.
\end{example}

\textit{Acknowledgements}. The authors wish to express sincere gratitude to Tom Mollee for his careful reading of the manuscript and valuable suggestions to improve the readability of the paper. Thanks are also due to Chiun-Chuan Chen and Mach Nguyet Minh for the fruitful discussions. The authors are grateful to the anonymous referee for many helpful comments and valuable suggestions on this paper.
%They also thank the referees for their constructive remarks. 
 %The research of L.-C. Hung is partly supported by the grant 104EFA0101550 of Ministry of Science and Technology, Taiwan.
%%%%%%%% Bibliography %%%%%%%%%%%%%%%%%%%%%%%%%%%%%%%%%%%%%%%%%%%%%%%%%

        %Editorial office suggest use of the AMS commands 
        %(which can be cited like ordinary bibitem references):

\Refs

\bibitem{AS2}
        \by M. Abramowitz and I. A. Stegun (Eds)
        \book Handbook of Mathematical Functions with Formulas, Graphs, and Mathematical Tables
        \publ National Bureau of Standards, Applied Mathematics Series {\bf 55}, 9th printing
        \publaddr Washington 
        \year 1970
\endref

\bibitem{D'Aurizio}
        \by J. D'Aurizio
        \paper Refinements of the {S}hafer-{F}ink inequality of arbitrary
              uniform precision
        \jour Math. Inequal. Appl.
        \vol 17
        \issue 4
        \yr 2014
        \pages 1487--1498
\endref

\bibitem{Chebyshev}
        \by T. J. Rivlin
        \book Chebyshev Polynomials
        \publ Wiley
        \publaddr New York 
        \year 1990
\endref

\bibitem{Sandor}
        \by J. S\'andor
        \paper On D'Aurizio's trigonometric inequality
        \jour J. Math. Inequal.
        \vol 10
        \issue 3
        \yr 2016
        \pages 885--888
\endref

\endRefs

\end{document}